\documentclass[11pt]{article}
\usepackage{latexsym,amssymb,amsmath,ifthen,bbm,fixmath,amsthm}
\usepackage[notref,notcite,color]{showkeys}
\usepackage[nobysame,initials]{amsrefs}
\usepackage{mathtools}
\usepackage{hyperref}

\newcommand{\C}[1]{{\protect\cal #1}}

\newcommand{\e}{\varepsilon}

\newcommand{\beq}[1]{\begin{equation}\label{eq:#1}}
\newcommand{\eeq}{\end{equation}}

\newtheorem{theorem}{Theorem}
\newtheorem{lemma}[theorem]{Lemma}

\newtheorem{proposition}[theorem]{Proposition}

\newtheorem{corollary}[theorem]{Corollary}

\newtheorem{cla}{Claim}[theorem]

\newenvironment{poc}{\begin{proof}[Proof of the claim]}{\end{proof}}

\newcommand{\hide}[1]{}

\newcommand{\ptd}[1]{\ifthenelse{\equal{#1}{}}
{\cite{Pikhurko14}}
{\cite{Pikhurko14}*{#1}}}

\begin{document}
\title{\bf\Large Strong non-principality of positive codegree Tur\'an density}
\date{}
\author{Levente Bodnár\thanks{Mathematics Institute and DIMAP, University of Warwick, Coventry, UK. Research supported by ERC Advanced Grant 101020255. Email: \texttt{\{levente.bodnar,jun.gao,o.pikhurko,shumin.sun\}@warwick.ac.uk}}
\and Jun Gao\footnotemark[1]
\and Oleg Pikhurko\footnotemark[1] 
\and Mingyuan Rong\thanks{School of Mathematical Sciences, USTC, Hefei, China. Supported by National Key R\&D Program of China 2023YFA1010201, NSFC Grant No. 12125106 and the USTC Excellent PhD Students Overseas Study Program. Email:\texttt{rong\_ming\_yuan@mail.ustc.edu.cn}}
\and Shumin Sun\footnotemark[1]}

\maketitle
\begin{abstract}
    The \emph{minimum positive codegree} $\delta^+_{k-1}(G)$ of a $k$-graph $G$
    is the minimum, over all $(k-1)$-sets that lie in at least one edge, of the number of edges containing that set. The \emph{positive codegree Tur\'an density} of a $k$-graph family $\mathcal{F}$ is the asymptotically maximum value of $\delta^+_{k-1}(G)/n$ over all $\C F$-free $k$-graphs $G$ with $n\to\infty$ vertices. In this note, we establish a strong version of non-principality with respect to this density by proving that  for every $k\ge3$ there exist two $k$-graphs $F_1$ and $F_2$ such that 
    $$
    0<\gamma^+(F_1, F_2) < \min\{\gamma^+(F_1), \gamma^+(F_2)\}.
    $$
\end{abstract}
\section{Introduction}

For an integer $k \geq 2$, a \textit{$k$-uniform hypergraph} (or \textit{$k$-graph}) $H$ consists of a vertex set $V(H)$ and an edge set $E(H) \subseteq \binom{V(H)}{k}$, that is, $E(H)$ is a collection of $k$-subsets of $V(H)$. 

Given a family $\mathcal{F}$ of $k$-graphs, the \textit{Tur\'an number} $\text{ex}(n, \mathcal{F})$ is the maximum number of edges in an $n$-vertex $k$-graph that contains no member of $\mathcal{F}$ as a subgraph. 
The \textit{Tur\'an density} of $\mathcal{F}$ is defined as the limit 
\[
\pi(\mathcal{F}) := \lim_{n \to \infty} \frac{\text{ex}(n, \mathcal{F})}{\binom{n}{k}},
\]
where the existence of the limit follows from the standard averaging argument of Katona, Nemetz, and Simonovits~\cite{KNS64}.
For $k=2$, the celebrated Erd\H{o}s--Stone--Simonovits Theorem~\cite{ES46,ES66} provides a complete characterization of this density. However, for $k \geq 3$, Tur\'an problems are much more difficult. Even for simple-looking cases like the complete $3$-graph on $4$ vertices, the exact value remains unknown. For more background, we refer the reader to the surveys \cite{kee11, sido95}.

A family $\mathcal{F}$ is called \textit{non-principal} if its Tur\'an density is strictly smaller than the density of each of its members, meaning $\pi(\mathcal{F}) < \min_{F \in \mathcal{F}} \pi(\{F\})$. While no such family exists when $k=2$, Mubayi and R\"odl \cite{MR02} conjectured that non-principal families do exist for $k \geq 3$. This was first confirmed, for triple systems, by Balogh \cite{B02}. Later, Mubayi and Pikhurko \cite{MP08} extended this result by finding a non-principal family of size two for every $k \geq 3$.

Another well-studied parameter is the \textit{codegree Tur\'an density} $\gamma(\mathcal{F})$. For a $k$-graph $H$, the \emph{minimum codegree} $\delta_{k-1}(H)$ is the minimum number of edges containing any $(k-1)$-set of vertices. The \emph{codegree Tur\'an number} $\text{ex}_{k-1}(n, \mathcal{F})$ is the maximum possible value of $\delta_{k-1}(H)$ for an $n$-vertex $\mathcal{F}$-free $k$-graph, and the \emph{codegree density} is 
\[
\gamma(\mathcal{F}) := \lim_{n \to \infty} \frac{\text{ex}_{k-1}(n, \mathcal{F})}{n},
\]
where the limit is well-defined for any $k$-graph family $\mathcal{F}$, as shown in \cite[Proposition 1.2]{MubayiZhao07}. 
Mubayi and Zhao \cite{MubayiZhao07} proved the existence of finite non-principal families for $\gamma$. They also asked whether such a family of size two exists for all $k \ge 3$. For even $k \ge 4$, Sudakov (see \cite[Page 1131]{MubayiZhao07}) observed that a construction could be obtained by adapting the methods of Mubayi and Pikhurko \cite{MP08}. Recently, Gao, Pikhurko, Rong, and Sun \cite{GPRS2026rational} completely resolved this problem for all $k \ge 3$. More recently, Lin, Sun, Wang, and Zhou \cite{LSWZ26} further extended these investigations to the so-called $(k-2)$-uniform Tur\'an density, proving that non-principal families of size two exist in that setting as well.

In this note, we focus on the positive codegree version of the Tur\'an problem, which was introduced by Balogh, Lemons, and Palmer~\cite{BLP21}, and whose systematic study was initiated by 
Halfpap, Lemons, and Palmer~\cite{HLP25}. 
Formally, for a $k$-graph $H$ with $E(H)\not=\emptyset$, let $\delta^+_{k-1}(H)$ be the minimum codegree among all $(k-1)$-sets with codegree at least~$1$; if $H$ has no edges then we define $\delta^+_{k-1}(H):=0$. The \emph{positive codegree Tur\'an number} $\text{ex}^+_{k-1}(n, \mathcal{F})$ is the maximum possible value of $\delta^+_{k-1}(H)$ for an $n$-vertex $\mathcal{F}$-free $k$-graph, and the \emph{positive codegree density} is
\[
\gamma^+(\mathcal{F}) := \lim_{n \to \infty} \frac{\text{ex}^+_{k-1}(n, \mathcal{F})}{n}.
\]
The existence of the limit can be proved via a few different arguments, see~\cite{HLP25,P23On}.

One motivation behind the above definitions is to make various partite constructions (which may contain many $(k-1)$-sets not covered by a single $k$-edge but are very important in the classical Tur\'an theory) meaningful for a codegree-type problem. A number of results  on positive codegree have already been obtained; we refer the reader to \cite{BaloghHalfpapLidickyPalmer26} for an overview of the known results on this function. 
Nonetheless, many basic questions remain open.

Our main result gives an explicit non-principal family $\C F$ consisting of only two forbidden $k$-graphs. For $k$-graphs $F$ and $H$, let us abbreviate $\gamma^+(\{F\})$ and $\gamma^+(\{F,H\})$ to $\gamma^+(F)$ and $\gamma^+(F,H)$ respectively.

\begin{theorem}\label{thm:non-principal}
  For every integer $k \geq 3$, there exist $k$-graphs $F_1$ and $F_2$ such that
  \[
  0 < \gamma^+(F_1, F_2) < \min\{\gamma^+(F_1), \gamma^+(F_2)\}.
  \]
\end{theorem}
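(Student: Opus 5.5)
The plan follows the Mubayi--Pikhurko strategy for non-principality, adapted to positive codegree. I would fix two ``model'' constructions $A_n$ and $B_n$ with different combinatorial structure, each having $\delta^+_{k-1}$ linear in $n$. Then I would choose $F_1$ and $F_2$ so that each model contains one of the forbidden graphs but not the other: $F_1 \subseteq B_n$ but $F_1\not\subseteq A_n$, and $F_2\subseteq A_n$ but $F_2\not\subseteq B_n$. This immediately gives lower bounds $\gamma^+(F_1)\ge \lim \delta^+_{k-1}(A_n)/n=:\alpha$ and $\gamma^+(F_2)\ge \beta:=\lim\delta^+_{k-1}(B_n)/n$. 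The remaining work is to show that forbidding both graphs forces density strictly below $\min\{\alpha,\beta\}$, while staying positive.

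\textbf{Model constructions.} For $A_n$ I would take the complete balanced $k$-partite $k$-graph. Every $(k-1)$-set lying in an edge is transversal to $k-1$ parts, so it has codegree exactly $n/k$, giving $\alpha=1/k$. For $B_n$ I would take an algebraic construction: the vertex set is $\mathbb{Z}_m$ blown up into $m$ equal classes, and the edges are the $k$-sets meeting $k$ distinct classes whose labels sum to $0 \pmod m$. For a positive $(k-1)$-set the last class is determined, so $\beta=1/m$. I would choose $m$ so that $1/m$ is comparable to $1/k$ (or adjust the construction so that $\beta\ge\alpha$).

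\textbf{Choice of forbidden graphs.} I would choose $F_1$ to be a small ``non-$k$-partite'' configuration, for instance $k+1$ vertices carrying several edges. Such a configuration cannot embed in any $k$-partite graph, but appears in $B_n$ because vertices in the same residue pattern can be reused. I would choose $F_2$ to be $k$-partite, hence contained in blow-ups like $A_n$, but violating the sum condition; for example, a configuration of edges whose class-label equations are inconsistent mod $m$. This makes $F_2\not\subseteq B_n$. In both cases the embedding and non-embedding claims are finite checks. The positivity $\gamma^+(F_1,F_2)>0$ would come from a third, sparser construction $C_n$ avoiding both graphs, such as an iterated or ``dispersed'' $k$-partite structure with positive codegree $cn$ for some small $c>0$.

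\textbf{Main obstacle: the upper bound.} The hard step is proving $\gamma^+(F_1,F_2)\le \min\{\alpha,\beta\}-\e$. Let $G$ be $\{F_1,F_2\}$-free with $\delta^+_{k-1}(G)\ge(\min\{\alpha,\beta\}-\e)n$. I would first show that $F_1$-freeness and high positive codegree force a stability structure. For this I would look at the link of a positive $(k-2)$-set: its neighbourhoods are large and essentially disjoint, so $G$ is close to $k$-partite on each component. Then, inside this near-$k$-partite structure, the large codegrees supply enough transversal edges to build a copy of $F_2$ greedily, a contradiction. The delicate point is keeping the margin $\e$ uniform: the argument must rule out $G$ being ``between'' the two models. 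I would try the first approach above, namely a link-graph analysis combined with a supersaturation/removal argument, and if that fails, tune $F_1$ and $F_2$ by adding edges until this step goes through.
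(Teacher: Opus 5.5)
Your framework fails at the choice of $F_2$. You require $F_2\subseteq A_n$, i.e.\ $F_2$ is $k$-partite, and then $\gamma^+(F_2)=0$. The reason is that a $k$-graph with an edge and $\delta^+_{k-1}\ge cn$ has at least $(cn-k)^k/k!$ edges. To see this, start from an edge and replace its vertices one at a time, each time using the $\ge cn$ neighbours of the current $(k-1)$-subset. By Erd\H{o}s's theorem on complete $k$-partite $k$-graphs, such a $k$-graph therefore contains every fixed $k$-partite $k$-graph.

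You can see this directly for your $B_n$. Fix $k$ distinct residues summing to $0$ (they must exist for $B_n$ to have edges). All $k$-sets meeting these $k$ classes once are edges, so $B_n$ contains a complete balanced $k$-partite $k$-graph on $kn/m$ vertices, and hence contains $F_2$. The ``inconsistent label equations'' never arise, because you may give each part of $F_2$ a single residue. So your bound $\gamma^+(F_2)\ge\beta$ is false, and $\min\{\gamma^+(F_1),\gamma^+(F_2)\}=0$.

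Your proposed $F_1$ fails as well. In $B_n$, exactly as in $A_n$, the neighbourhood of every $(k-1)$-set lies in a single class. Hence no non-$k$-partite $(k+1)$-vertex $k$-graph embeds in $B_n$; such a graph is one in which every pair of vertices lies in an edge.

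The real lesson is the reverse of your set-up. In any valid example both $F_1$ and $F_2$ must be non-$k$-partite. Then $A_n$ avoids both, which gives $\gamma^+(F_1,F_2)\ge\frac1k$ and forces both individual densities above $\frac1k$. So $A_n$ can only serve as the positivity construction (this is its role in the paper's Lemma~\ref{lem:main}), never as one of the two extremal models.

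Separately, the upper bound, which is the actual content, is not argued. Appeals to ``stability'', ``essentially disjoint neighbourhoods'', or ``tune $F_1,F_2$ until it works'' give no mechanism. Nothing supports the claim that $F_1$-freeness plus large positive codegree makes $G$ nearly $k$-partite.

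The paper supplies two incomparable models of density $\frac12$:
\begin{itemize}
\item All $k$-sets meeting a fixed half $A$ in exactly one vertex. This avoids $F_1=Q_2^k$: the non-$2$-colourable core $H_2^{k-1}$ in the shadow must have an edge $S$ inside the other half, and $N(S)=A$ contains no two vertices of a common edge.
\item The transversal $k$-sets of a balanced $(2k-2)$-partition. This avoids $F_2=D_{k+1}^k$, whose $2$-shadow is $K_{2k-1}$.
\end{itemize}
The upper bound $\frac12-\alpha$ then follows from a concrete chain of steps.
\begin{itemize}
\item $D_{k+1}^k$-freeness makes every link $K_{k+1}$-free, so Tur\'an's theorem gives $\delta^+_{k-2}(\partial_{k-1}G)\ge(\frac12+3\alpha)n$.
\item Since $\gamma^+(H_2^{k-1})=\frac12$, the shadow contains $H_2^{k-1}$.
\item $Q_2^k$-freeness then yields a set $Y$ of size at least $(\frac12-2\alpha)n$ meeting every edge in at most one vertex.
\item Take a $(k-2)$-subset of an edge, chosen to contain that edge's vertex in $Y$ if it has one. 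All its shadow neighbours lie outside $Y$, contradicting the shadow codegree bound.
\end{itemize}
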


An auxiliary result that is of independent interest is a construction, for every $r\ge2$ and $k
\ge 3$, of a single forbidden $k$-graph $H_r^k$ with $\gamma^+(H_r^k)=\frac{r-1}r$, see Corollary~\ref{thm:H graph}.

\hide{
\noindent\textbf{Proof Sketch:} For any integer $k \ge 3$, our goal is to construct two graphs $F_1$ and $F_2$ such that 
$0<\gamma^+(F_1, F_2) < \min\{\gamma^+(F_1), \gamma^+(F_2)\}$. 
To construct $F_1$, we first show there exists a sequence of graphs $H_r^k$ such that 
$\gamma^+(H_r^k) = \frac{r-1}{r}$. Since $H_2^k$ is not sufficient for our main result, we define $F_1 \coloneqq Q_2^k$ using $H_2^{k-1}$. This construction ensures that $\gamma^+(F_1) = \frac{1}{2}$.
For $F_2$, we take a large clique and add all pairs to a common $(k-2)$-set, 
which ensures $\gamma^+(F_2) \ge \frac{1}{2}$. Finally, we prove that $\gamma^+(F_1, F_2) < \frac{1}{2}$ by utilizing the 
specific structures of $F_1$ and $F_2$.
}

\section{Notation and proofs}

Given a $k$-graph $H$ and a positive integer $t<k$, its \emph{$t$-shadow} $\partial_{t}(H)$ is the collection of all $t$-element vertex subsets that are contained in at least one edge of $H$.
For a vertex subset $X \subseteq V(H)$, we let $H[X]$ denote the subgraph of $H$ induced on $X$, whose edges are precisely those edges of $H$ that are fully contained in $X$.
For a subset $S \subseteq V(H)$ of size $k-1$, the neighbourhood of $S$, denoted by $N_{H}(S)$, is the set of all vertices $v \in V(H) \setminus S$ such that $S \cup \{v\} \in E(H)$.
We will omit the subscript $H$ when the underlying hypergraph is clear from the context.
For an integer $r \ge 1$, we say that a $k$-graph is \emph{$r$-colourable} if its vertices can be coloured with $r$ colours so that no edge is monochromatic.
For any $k$-graph $H$, it follows from $\delta^+_{k-1}(H) \ge \delta_{k-1}(H)$ that we have $\gamma^+(H)\ge \gamma(H)$.
We will use the following observation of Keevash and Zhao~\cite{KZ07} that follows by considering large $k$-graphs on a balanced vertex $r$-partition where we take all $k$-sets that do not lie entirely inside a part.

\begin{proposition}\label{ob:KZ}
If a $k$-graph $F$ is not $r$-colourable, then $\gamma(F)\ge \frac{r-1}{r}$. In particular, $\gamma^+(F)\ge \frac{r-1}{r}$.
\end{proposition}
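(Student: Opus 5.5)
The statement just given is Proposition~\ref{ob:KZ}. The plan is to exhibit, for every large $n$, an $F$-free $k$-graph on $n$ vertices whose minimum codegree is about $\frac{r-1}{r}n$.

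\emph{Construction.} Take a vertex set $V$ of size $n$ and split it into $r$ parts $V_1,\dots,V_r$ whose sizes differ by at most one, so each part has size $\lceil n/r\rceil$ or $\lfloor n/r\rfloor$. Let $G_n$ be the $k$-graph on $V$ whose edges are all $k$-subsets not contained entirely in a single part.

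\emph{$F$-freeness.} Suppose $G_n$ contained a copy of $F$. Colour each vertex of that copy by the index of the part containing it. Every edge of the copy is an edge of $G_n$, so it is not inside one part and hence not monochromatic. This is an $r$-colouring of $F$ with no monochromatic edge, contradicting the hypothesis. Hence $G_n$ is $F$-free.

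\emph{Codegree bound.} Fix any $(k-1)$-set $S\subseteq V$.
\begin{itemize}
\item If $S$ meets at least two parts, then every $v\in V\setminus S$ completes an edge, so the codegree of $S$ is $n-k+1$.
\item If $S\subseteq V_i$, then $S\cup\{v\}$ is an edge exactly when $v\notin V_i$, so the codegree is $n-|V_i|\ge n-\lceil n/r\rceil$.
\end{itemize}
Therefore
\[
\delta_{k-1}(G_n)\ \ge\ \min\{\,n-k+1,\ n-\lceil n/r\rceil\,\}\ =\ \tfrac{r-1}{r}n-O(1).
\]

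\emph{Conclusion.} This gives $\text{ex}_{k-1}(n,F)\ge \frac{r-1}{r}n-O(1)$. Dividing by $n$ and letting $n\to\infty$ yields $\gamma(F)\ge \frac{r-1}{r}$. The second assertion follows from the inequality $\gamma^+(F)\ge\gamma(F)$ noted just before the proposition; it also follows directly, since $\delta^+_{k-1}(G_n)\ge\delta_{k-1}(G_n)$ because $G_n$ has edges.

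There is no real obstacle here. The only care needed is in the $O(1)$ rounding from unequal part sizes and the boundary term $n-k+1$, and both vanish in the limit.
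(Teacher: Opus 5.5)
Your proof is correct and uses the same construction the paper cites from Keevash and Zhao: the balanced $r$-partition whose edges are all $k$-sets not lying inside a single part. This hypergraph is $F$-free because any copy of $F$ would inherit a proper $r$-colouring from the parts, and it has minimum codegree $n-\lceil n/r\rceil=\frac{r-1}{r}n-O(1)$; passing to $\gamma^+$ via $\delta^+_{k-1}\ge\delta_{k-1}$ is exactly as in the paper.
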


Let $R$ be a $(k-1)$-uniform hypergraph and let $H$ be a $k$-uniform hypergraph. 
We define the $k$-uniform hypergraph $F(R,H)$ as follows. Let $X$ be a 
copy of $R$, called the \emph{core}, and let $S_1, \dots, S_m$ be all the $(k-1)$-edges of $X$. 
Let $H_1, \dots, H_m$ be copies of $H$ such that $X, H_1, \dots, H_m$ 
are pairwise vertex-disjoint. 
The vertex set of $F(R,H)$ is defined as
\[
V(F(R,H)) := V(X) \cup \bigcup_{j=1}^m V(H_j),
\]
and the edge set is defined as
\[
E(F(R,H)) := \bigcup_{j=1}^m \{S_j \cup \{x\} : x \in V(H_j)\} \cup \bigcup_{j=1}^m E(H_j).
\]

This operation will be useful to us because of the following result.

\begin{theorem}\label{thm: F(R,H)}
Let $r \ge 2$ and $k \ge 3$ be integers.
Let $R$ be a $(k-1)$-uniform hypergraph that is not $r$-colourable, and let $H$ be 
a $k$-uniform hypergraph that is not $(r-1)$-colourable. If $\gamma^+(R) = \frac{r-1}{r}$ 
and $\gamma^+(H) = \frac{r-2}{r-1}$, then $F = F(R,H)$ 
is not $r$-colourable and satisfies $\gamma^+(F) = \frac{r-1}{r}$. 
\end{theorem}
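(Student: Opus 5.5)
Non-$r$-colourability comes first. Suppose $c$ is an $r$-colouring of $V(F)$ with no monochromatic edge. Since $R$ is not $r$-colourable, the restriction of $c$ to the core $X$ has some monochromatic $(k-1)$-edge $S_j$, say of colour $i$. Every edge $S_j\cup\{x\}$ with $x\in V(H_j)$ must then avoid being monochromatic, so no vertex of $H_j$ receives colour $i$. Hence $H_j$ is coloured with at most $r-1$ colours and no edge monochromatic, contradicting that $H$ is not $(r-1)$-colourable. By Proposition~\ref{ob:KZ}, this already gives $\gamma^+(F)\ge\gamma(F)\ge \frac{r-1}{r}$.

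For the upper bound, fix $\e>0$ and let $G$ be a large $F$-free $k$-graph on $n$ vertices with $\delta^+_{k-1}(G)\ge(\frac{r-1}{r}+\e)n$. We aim for a contradiction by embedding $F$ greedily.

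\textbf{Find the core.} Take any edge $e$ of $G$ and a $(k-2)$-subset $T\subseteq e$. Consider the link $(k-1)$-graph $L=\{A: A\cup T\ldots\}$. We need a $(k-1)$-graph with large positive codegree, so a more natural choice is to take a vertex $v$ and its link $L_v=\{A\in\binom{V(G)}{k-1}: A\cup\{v\}\in E(G)\}$. If $B$ is a $(k-2)$-set lying in some edge $A$ of $L_v$, then $B\cup\{v\}$ is a positive $(k-1)$-set of $G$. Thus $|N_G(B\cup\{v\})|\ge(\frac{r-1}{r}+\e)n$, and every $u$ in this neighbourhood gives $B\cup\{u\}\in L_v$. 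So $\delta^+_{k-2}(L_v)\ge(\frac{r-1}{r}+\e)n$. Since $\gamma^+(R)=\frac{r-1}{r}$ and $n$ is large, $L_v$ contains a copy of $R$, which we use as the core $X$. (If one prefers not to use links, any $(k-1)$-graph of positive codegree density above $\frac{r-1}{r}$ inside $\partial_{k-1}G$ would serve equally well.) All edges of this copy are positive $(k-1)$-sets of $G$, which is all we need.

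\textbf{Attach the copies of $H$.} For each edge $S_j$ of the core, the set $N_j:=N_G(S_j)$ has size at least $(\frac{r-1}{r}+\e)n$. We want a copy of $H$ inside $G[N_j]$, disjoint from previously used vertices, of which there are $O(1)$. Let $G_j=G[N_j\setminus U]$, where $U$ is the set of used vertices. Any positive $(k-1)$-set $D$ of $G_j$ is positive in $G$, so it has at least $(\frac{r-1}{r}+\e)n$ neighbours in $G$. Of these, at least
\[
|N_j|-|U|-\bigl(n-(\tfrac{r-1}{r}+\e)n\bigr)\ge(\tfrac{r-2}{r}+2\e)n-O(1)
\]
lie in $N_j\setminus U$. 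Writing $n_j:=|N_j\setminus U|\le n$, this gives
\[
\frac{\delta^+_{k-1}(G_j)}{n_j}\ge \frac{(\frac{r-2}{r}+2\e)n-O(1)}{n}.
\]
This ratio is at least $\frac{r-2}{r}$, not $\frac{r-2}{r-1}$, so it is too weak in general. Replacing the trivial bound $n_j\le n$ by a better one does not help either: $n_j$ can be close to $n$ when $N_j$ is almost everything.

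This density step is the main obstacle. The fix I would try is to use both bounds at once. The relative density is $1-(n-\delta^+)/|N_j|$, which is large when $|N_j|$ is large but weak when $|N_j|\approx\frac{r-1}{r}n$. In the latter case, $|N_j|\approx\frac{r-1}{r}n$ gives $1-\frac{1/r}{(r-1)/r}=\frac{r-2}{r-1}$, and the extra $\e$ pushes the density strictly above $\frac{r-2}{r-1}$. Indeed, with $\delta^+\ge(\frac{r-1}{r}+\e)n$ and $|N_j|\le n$, the bound on the positive codegree inside $N_j$ is $|N_j|-(n-\delta^+)\ge|N_j|-(\frac1r-\e)n$. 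Its ratio to $|N_j|$ is increasing in $|N_j|$ and is minimised at $|N_j|=(\frac{r-1}{r}+\e)n$. There it equals $\frac{\frac{r-2}{r}+2\e}{\frac{r-1}{r}+\e}>\frac{r-2}{r-1}$. So the earlier pessimism came only from dividing by $n$ instead of by $|N_j|$, and the bound does work.

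Since $n_j\to\infty$ and $\gamma^+(H)=\frac{r-2}{r-1}$, each $G_j$ contains a copy of $H$. When $r=2$, so that $H$ is not $1$-colourable, we simply need an edge, and $G_j$ has one because its positive codegree is positive. One caveat: if $G_j$ happened to have no edges at all, the bound above would be vacuous. However, $N_j\setminus U$ contains a vertex $x$ and a positive $(k-2)$-set $B\subseteq S_j$; then $(S_j\setminus B')\cup\ldots$ gives a positive set, so we seed by choosing any $(k-1)$-set in $N_j$ that is positive in $G$. Concretely, given the $O(1)$ removal, we apply the density argument to $G[N_j]$ minus $U$ directly and take an arbitrary positive set obtained from some edge through a vertex of $N_j$ and extended using codegrees. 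Embedding the copies of $H$ one by one yields $F\subseteq G$, a contradiction, so $\gamma^+(F)\le\frac{r-1}{r}$.
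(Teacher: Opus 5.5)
Your proof follows the paper's: the same colouring argument, the lower bound from Proposition~\ref{ob:KZ}, a core $R$ found in a $(k-1)$-graph inside $\partial_{k-1}(G)$, the same relative-density computation, and the same greedy embedding. For the core you use the vertex link $L_v$, a subgraph of the shadow with $N_{L_v}(B)=N_G(B\cup\{v\})$; this works, provided you choose $v$ in some edge so that $L_v\neq\emptyset$. Your corrected density bound also works: it divides by $|N_j\setminus U|$ rather than $n$ and is minimised at $|N_j|=(\frac{r-1}{r}+\e)n$, which is exactly the paper's inequality.

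The gap is showing that $G_j=G[N_j\setminus U]$ has at least one edge. This is essential, not a side caveat. Under the convention $\delta^+_{k-1}=0$ for edgeless graphs, your codegree bound gives nothing until an edge is found. For the same reason your $r=2$ remark (``$G_j$ has one because its positive codegree is positive'') is circular. Your attempted fix does not work as written:
\begin{itemize}
\item $N_j\setminus U$ cannot contain a set $B\subseteq S_j$, since $S_j\cap N_G(S_j)=\emptyset$.
\item ``Choose any $(k-1)$-set in $N_j$ that is positive in $G$'' presupposes what must be proved.
\end{itemize}

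Your phrase ``extended using codegrees'' points to the right idea, but it has to be carried out. Let $W:=N_j\setminus U$ and start from the edge $S_j\cup\{x\}$ with $x\in W$, which meets $W$ in one vertex. Given an edge $E$ with $a:=|E\cap W|<k$, let $T\subseteq E$ be a $(k-1)$-set containing $E\cap W$. Then $T$ is positive in $G$, and
\[
|N_G(T)|\ge\bigl(\tfrac{r-1}{r}+\e\bigr)n>n-|W|,
\]
because $|W|\ge(\frac{r-1}{r}+\e)n-O(1)$ and $r\ge2$. So $T$ has a neighbour $y\in W$, and $T\cup\{y\}$ is an edge meeting $W$ in $a+1$ vertices. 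After at most $k-1$ steps you get an edge inside $W$. This is the paper's argument, which it phrases via an edge maximising $|E\cap V(G')|$. With this step added, your proof is complete.
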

\begin{proof}
    First, we show that $F$ is not $r$-colourable. Suppose, for a contradiction, that there is a proper $r$-colouring of the vertex set of $F$.
    The core $X$ of $F$, which is a copy of $R$, is not $r$-colourable, so it contains a monochromatic edge $S$. 
    In $F$, the neighbourhood of $S$  cannot use this colour but spans a copy of $H$. This implies that $H$ is $(r-1)$-colourable, a contradiction. Thus $F$ is not $r$-colourable.

    By Proposition~\ref{ob:KZ}, we have $\gamma^+(F)\ge \frac{r-1}{r}$. Thus, to prove $\gamma^+(F)=\frac{r-1}{r}$, it suffices to show that $\gamma^+(F)\le \frac{r-1}{r}$.
    Take small $\varepsilon>0$ and let $n$ be sufficiently large with respect to $\varepsilon$. Let $G$ be a $k$-graph on $n$ vertices with $\delta^+_{k-1}(G)\ge \bigl(\frac{r-1}{r}+\varepsilon\bigr)n$.
    We have to show that $G$ contains a copy of $F$ as a subgraph. First we prove the following claim.
    \begin{cla}\label{claim:key}
    For any $(k-1)$-set $S$ that is contained in some edge of $G$ and any set $U$ with $|U|\le \frac{\varepsilon}{2}n$, the induced subgraph $G[N(S)\setminus U]$ contains a copy of $H$.
    \end{cla}
    \begin{poc}
    Since $S$ is contained in some edge of $G$, we have $|N(S)|\ge \bigl(\frac{r-1}{r}+\varepsilon\bigr)n$, which implies that $|N(S)\setminus U|\ge \bigl(\frac{r-1}{r}+\frac{\varepsilon}{2}\bigr)n$. Let $G'\coloneqq G[N(S)\setminus U]$.
    For any $(k-1)$-set $T\subseteq V(G')$, if $T$ is contained in some edge of $G$, then
    \[
    |N_{G'}(T)|\ge |N_G(T)|-|V(G)\setminus V(G')|\ge \left(\frac{r-1}{r}+\varepsilon\right)n-n +|V(G')|.
    \]
    It is routine to check that
    \[
    \frac{\frac {r-1}r + \e-1}{
    \frac{r - 1}{r} + \frac{\e}2}+1 - \left(\frac{r - 2}{r - 1} + \frac{\e}2\right) = \frac{\e \left(2 r^2+2
           r-2-\e r^2+\e r\right)}{2 (r-1) (2 r-2+\e r)}>0,
    \]
    which implies that $|N_{G'}(T)|\ge \bigl(\frac{r-2}{r-1}+\frac{\varepsilon}{2}\bigr)|V(G')|$.
    If $T$ is not contained in any edge of $G$, then $T$ is also not contained in any edge of $G'$. Hence either $G'$ has no edges, or its minimum positive codegree is at least
    $    \bigl(\tfrac{r-2}{r-1}+\tfrac{\varepsilon}{2}\bigr)|V(G')|$.
    
    We claim that, for $r\ge 2$, the edge set $E(G')$ is non-empty.
    Let $E$ be an edge of $G$ such that $|E\cap V(G')|$ is as large as possible.
    Suppose, for a contradiction, that $E\nsubseteq V(G')$. Let $a\coloneqq |E\cap V(G')|$. Then $E$ contains a $(k-1)$-set $T$ with $|T\cap V(G')|=a$. Since
    \[
    |N_G(T)|\ge \bigl(\tfrac{r-1}{r}+\varepsilon\bigr)n> |V(G)\setminus V(G')|,
    \]
    there exists a vertex $v\in V(G')$ such that $T\cup\{v\}\in E(G)$. This contradicts the choice of $E$.
    Thus we have $\delta_{k-1}^+(G') \ge (\frac{r-2}{r-1}+\frac{\varepsilon}{2})|V(G')|$.
    It follows from $\gamma^+(H) = \frac{r-2}{r-1}$, together with the fact that $|V(G')|$ is large enough, that $G'$ contains a copy of $H$.
    \end{poc}
It follows from $\delta_{k-1}^+(G)\ge (\frac{r-1}{r}+\varepsilon)n$ that $\delta_{k-2}^+(\partial_{k-1}(G))\ge (\frac{r-1}{r}+\varepsilon)n$.
Since $\gamma^+(R) = \frac{r-1}{r}$, there exists a copy of $R$ in $\partial_{k-1}(G)$, which we denote by $X$.
By Claim~\ref{claim:key}, the neighbourhood of every $(k-1)$-set $S\in E(X)$, even after deleting any $\frac{\e}{2}n$ vertices from it, contains a copy of $H$. Since $n$ is large enough, we can take the edges $S$ of $X$ one by one and find for each $S\in E(X)$ a copy of $H$ in $N(S)$ which is vertex-disjoint from all previous copies as well as from $X$.
These copies together with the edges to $X$ form a copy of $F$, which finishes the proof of Theorem~\ref{thm: F(R,H)}.
\end{proof}

Let us define the $k$-graph $H_r^k$ inductively on $r=1,2,\dots$ 
and $k=2,3,\dots$. For the base cases, let $H_1^k$ consist of a single $k$-edge, 
and let $H_r^2 := K_{r+1}$ for $r \ge 1$. For $r\ge 2$ and $k\ge 3$, we inductively define 
 \[
 H_r^k := F(H_r^{k-1}, H_{r-1}^k).
 \]

\begin{corollary}\label{thm:H graph}
    For any integers $r\ge 1$ and $k\ge 2$, we have that $\gamma^+(H_r^k)=\frac{r-1}{r}$ and $H_r^k$ is not $r$-colourable.
\end{corollary}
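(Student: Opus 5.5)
We prove the two assertions together by induction on $r+k$, using Theorem~\ref{thm: F(R,H)} for the inductive step. Two families of base cases come first.

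\emph{The case $r=1$ and $k\ge 2$.} Here $H_1^k$ is a single $k$-edge. It is not $1$-colourable, since its unique edge is monochromatic under any $1$-colouring. Any $k$-graph with at least one edge contains $H_1^k$. Hence an $H_1^k$-free $k$-graph has no edges, so $\delta^+_{k-1}=0$ by definition, and therefore $\gamma^+(H_1^k)=0=\frac{1-1}{1}$.

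\emph{The case $k=2$ and $r\ge 1$.} Here $H_r^2=K_{r+1}$. It is not $r$-colourable, because a graph colouring with no monochromatic edge is a proper colouring and $\chi(K_{r+1})=r+1$. The lower bound $\gamma^+(K_{r+1})\ge\frac{r-1}{r}$ follows from Proposition~\ref{ob:KZ}. For the upper bound, note that for $k=2$ the positive codegree $\delta^+_1(G)$ is the minimum degree over non-isolated vertices. Let $G$ be a graph with $\delta^+_1(G)\ge(\frac{r-1}{r}+\e)n$. Delete the isolated vertices to obtain $G'$ on $n'\le n$ vertices. Then $\delta(G')\ge(\frac{r-1}{r}+\e)n\ge(\frac{r-1}{r}+\e)n'$. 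By Tur\'an's theorem, or simply by greedily picking common neighbours, $G'$ contains $K_{r+1}$. (I must make sure the paper's convention for $k=2$ agrees with this reading of $\delta^+_1$; I believe it does.)

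\emph{Inductive step: $r\ge2$ and $k\ge3$.} By definition $H_r^k=F(H_r^{k-1},H_{r-1}^k)$. We apply Theorem~\ref{thm: F(R,H)} with $R=H_r^{k-1}$ and $H=H_{r-1}^k$; both pairs $(r,k-1)$ and $(r-1,k)$ have smaller sum, so the induction hypothesis applies to each. It gives that $R$ is a $(k-1)$-graph that is not $r$-colourable with $\gamma^+(R)=\frac{r-1}{r}$, and that $H$ is a $k$-graph that is not $(r-1)$-colourable with $\gamma^+(H)=\frac{r-2}{r-1}$. These are exactly the hypotheses of Theorem~\ref{thm: F(R,H)}, which yields that $H_r^k$ is not $r$-colourable and $\gamma^+(H_r^k)=\frac{r-1}{r}$.

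\emph{Checks needed at the boundaries.} When $k-1=2$, the core $R$ is a graph, so Theorem~\ref{thm: F(R,H)} must be valid for a $2$-uniform core. Its proof passes to $\partial_{k-1}(G)$, which is then the $2$-shadow, and uses $\delta^+_{k-2}$ of that shadow, i.e.\ $\delta^+_1$. This matches the $k=2$ base case, so nothing breaks. When $r=2$, we have $H=H_1^k$ with $\gamma^+=0=\frac{r-2}{r-1}$, and ``not $1$-colourable'' just means having an edge. The theorem's proof covers this case because its claim explicitly shows that $G'$ has an edge for $r\ge2$. The main obstacle, then, is only confirming these edge cases of Theorem~\ref{thm: F(R,H)}; the induction itself is immediate.
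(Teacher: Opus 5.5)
Your proof is correct and is the same as the paper's: induction on $r+k$, with $r=1$ (trivial) and $k=2$ (Tur\'an's theorem) as base cases, and Theorem~\ref{thm: F(R,H)} applied to $R=H_r^{k-1}$ and $H=H_{r-1}^k$ in the inductive step. Your additional checks are sound. These are the lower bound via Proposition~\ref{ob:KZ}, the reading of $\delta^+_1$ as minimum degree over non-isolated vertices, and the boundary cases $k-1=2$ and $r=2$ of Theorem~\ref{thm: F(R,H)}; they spell out details the paper leaves implicit.
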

\begin{proof}
We prove the statement by induction on $r+k$. For $r=1$, the theorem holds trivially. For $k=2$, the statement follows from the celebrated Tur\'an theorem. If it holds for $(r-1,k)$ and $(r,k-1)$, then the statement for $(r,k)$ follows immediately from Theorem~\ref{thm: F(R,H)}.
\end{proof}

Let $k\ge 3$. Based on the $(k-1)$-graph $H_2^{k-1}$, we define the $k$-graph $Q_2^k$  as follows.
Let $Y$ be a copy of $H_2^{k-1}$, and let $S_1,\dots,S_m$ be all $(k-1)$-edges of~$Y$.
For each $1\le i\le m$, take new vertices $x_1^i,\dots,x_k^i$ and add the $k$-edge $\{x_1^i,\dots,x_k^i\}$ as well as the edges $S_i\cup\{x_1^i\}$ and $S_i\cup\{x_2^i\}$.
In $H_2^k$, the neighbourhood of each $S_i$ spans a $k$-edge (all $k$ of its vertices being joined to $S_i$); in $Q_2^k$, by contrast, only two vertices of this $k$-edge are joined to $S_i$.

By the definitions of $Q_2^k$ and $H_2^k$, for any $k\ge 3$, the graph $H_2^k$ contains a copy of $Q_2^k$ as a subgraph. Therefore,
\[
\gamma^+(Q_2^k)\le \gamma^+(H_2^k)\le \frac{1}{2}.
\]
Although $Q_2^k$ is 2-colourable when $k\ge 3$, it follows from the next proposition that $\gamma^+(Q_2^k)=\frac{1}{2}$.
\begin{proposition}\label{pro:F graph}
For any integer $k\ge 3$, $\gamma^+(Q_2^k)\ge \frac{1}{2}$.
\end{proposition}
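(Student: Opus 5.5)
The plan is to exhibit, for every large $n$, an explicit $Q_2^k$-free $k$-graph with minimum positive codegree about $n/2$. Since the non-$2$-colourability of $Q_2^k$ is not available, the extremal example will not be the complete bipartite-type construction of Proposition~\ref{ob:KZ}. Instead it will be a sparser, "one vertex on the small side'' construction, which kills the particular way the core of $Q_2^k$ must sit.

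\textbf{Construction.} Partition $[n]=A\cup B$ with $|A|=\lceil n/2\rceil$ and $|B|=\lfloor n/2\rfloor$. Let $G_n$ consist of all $k$-sets having exactly one vertex in $B$.

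\textbf{Step 1: the positive codegree.} The $(k-1)$-sets lying in some edge are exactly those with at most one vertex in $B$.
\begin{itemize}
\item A $(k-1)$-set inside $A$ has neighbourhood $B$, of size $\lfloor n/2\rfloor$.
\item A $(k-1)$-set with exactly one vertex in $B$ has neighbourhood $A$ minus the $k-2$ vertices of the set already in $A$, of size $\lceil n/2\rceil-(k-2)$.
\end{itemize}
Hence $\delta^+_{k-1}(G_n)\ge n/2-k$, so once $G_n$ is shown to be $Q_2^k$-free we get $\gamma^+(Q_2^k)\ge 1/2$.

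\textbf{Step 2: $G_n$ is $Q_2^k$-free.} Suppose a copy of $Q_2^k$ sits in $G_n$, and let $Y$ be its core, a copy of $H_2^{k-1}$. By Corollary~\ref{thm:H graph} with $r=2$ and $k-1\ge 2$, $Y$ is not $2$-colourable. Colouring vertices by the side $A$ or $B$ they lie in therefore yields a monochromatic core edge $S_i$.

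Each $S_i$ lies in the $k$-edge $S_i\cup\{x_1^i\}$ of $G_n$, so it has at most one vertex in $B$. Since $|S_i|=k-1\ge 2$, a monochromatic $S_i$ cannot lie in $B$, so $S_i\subseteq A$. Then both edges $S_i\cup\{x_1^i\}$ and $S_i\cup\{x_2^i\}$ force $x_1^i,x_2^i\in B$. But $x_1^i$ and $x_2^i$ both lie in the $k$-edge $\{x_1^i,\dots,x_k^i\}$ of the copy, which would then have at least two vertices in $B$. This contradicts the definition of $G_n$.

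The only delicate point is in Step 2: the colour class $B$ must be unable to host a core edge. This is exactly why each edge has a single vertex in $B$ and why $k-1\ge 2$, i.e.\ $k\ge 3$, is needed. Everything else is a direct verification.
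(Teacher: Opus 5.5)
Your proposal is correct and is essentially the paper's own proof. You use the same construction (all $k$-sets meeting the smaller half in exactly one vertex, with the names of the two parts swapped) and the same argument: the non-$2$-colourable core $H_2^{k-1}$ must have an edge inside the larger half, which forces both special neighbours $x_1^i,x_2^i$ into the smaller half and hence puts two vertices of a single edge there; your bound $\delta^+_{k-1}(G_n)\ge n/2-k$ in place of the paper's exact value $\lceil n/2\rceil-(k-2)$ is sufficient.
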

\begin{proof}
    Let $G$ be a $k$-graph on $n$ vertices, and let $A$ and $B$ be a balanced partition of $V(G)$, i.e., $|A|=\lfloor\frac{n}{2}\rfloor$ and $|B|=\lceil\frac{n}{2}\rceil$.
    Define $E(G)\coloneqq\{F\in\binom{V(G)}{k}: |F\cap A|=1\}$. Clearly, $\delta^+_{k-1}(G)=\lceil\frac{n}{2}\rceil-(k-2)$.
    To prove $\gamma^+(Q_2^k)\ge \frac{1}{2}$, it is enough to show that $G$ is $Q_2^k$-free.
    Suppose, for a contradiction, that $G$ contains a copy of $Q_2^k$. Then $\partial_{k-1}(G)$ contains a copy $X$ of $H_2^{k-1}$.
    By Corollary~\ref{thm:H graph}, the $(k-1)$-graph $H_2^{k-1}$ is not $2$-colourable, so one of its edges $S$ lies entirely inside one part. Since $A$ contains no pair from the 2-shadow of $G$, we must have $S\subseteq B$.
    For such $S$, we have $N_G(S)=A$. Hence the two special neighbours of $S$ required by the definition of $Q_2^k$ both lie in $A$.
    But every edge of $G$ meets $A$ in exactly one vertex, so these two vertices cannot lie together in an edge of $G$, a contradiction.
\end{proof}

Next, we define the $k$-graph $D_r^k$ for $k\ge 3$ and $r\ge 2$.
Let $V(D_r^k)$ be the union of two disjoint sets $Z$ and $K$ with $|Z|=k-2$ and $|K|=r$.
Define
\[
E(D_r^k):=\left\{Z\cup W : W\in \binom{K}{2}\right\}.
\]
Then we have the following property.
\begin{proposition}\label{pro:D graph}
$\gamma^+(D_r^k)\ge \frac{r-2}{k+r-3}$.
\end{proposition}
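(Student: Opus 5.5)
The bound will come from an explicit $D_r^k$-free construction. A copy of $D_r^k$ in a $k$-graph $G$ is exactly a $(k-2)$-set $Z$ whose link graph $\{W\in\binom{V(G)\setminus Z}{2}: Z\cup W\in E(G)\}$ contains a copy of $K_r$. So it suffices to build $G$ in which every $(k-2)$-set has a $K_r$-free link graph, while the minimum positive codegree stays about $\frac{r-2}{k+r-3}n$.

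If $r=2$ the claimed bound is $0$, which holds trivially, so assume $r\ge 3$. Set $m:=k+r-3\ge k$ and split $V(G)$, $|V(G)|=n$, into $m$ parts $V_1,\dots,V_m$ of sizes $\lfloor n/m\rfloor$ or $\lceil n/m\rceil$. Let $E(G)$ consist of all \emph{transversal} $k$-sets, namely those meeting $k$ distinct parts. This $G$ has edges since $m\ge k$.

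First I check that $G$ is $D_r^k$-free. Take a $(k-2)$-set $Z$. If $Z$ meets some part twice, then $Z$ lies in no edge and its link is empty. Otherwise $Z$ meets exactly $k-2$ parts. Its link graph is then the complete multipartite graph on the remaining $m-(k-2)=r-1$ parts, which is $(r-1)$-partite and hence $K_r$-free. In either case no $K_r$ appears, so $G$ contains no copy of $D_r^k$.

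Next I compute the minimum positive codegree. A $(k-1)$-set $T$ lies in some edge only if it is transversal. In that case $N(T)$ is the union of the $m-(k-1)=r-2$ parts that $T$ misses, so $|N(T)|\ge (r-2)\lfloor n/m\rfloor$. Hence
\[
\delta^+_{k-1}(G)\ge (r-2)\left\lfloor\frac{n}{k+r-3}\right\rfloor=\frac{r-2}{k+r-3}\,n-O(1).
\]
Dividing by $n$ and letting $n\to\infty$ gives $\gamma^+(D_r^k)\ge \frac{r-2}{k+r-3}$.

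The step needing the most care is the reduction in the first paragraph: one must check that every embedding of $D_r^k$ places its $(k-2)$-set $Z$ onto a single $(k-2)$-set whose link contains the image of $K$. This holds because all edges of $D_r^k$ share the same $Z$. After that, the transversal structure makes both verifications routine.
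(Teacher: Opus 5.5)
Your proof is correct and uses the same construction as the paper, the balanced $(k+r-3)$-partite $k$-graph whose edges are the transversal $k$-sets, with the same codegree bound $(r-2)\lfloor n/(k+r-3)\rfloor$. The only difference is how $D_r^k$-freeness is certified: the paper notes that $\partial_2(G)$ is $K_{k+r-2}$-free while $\partial_2(D_r^k)=K_{k+r-2}$, whereas you check that every $(k-2)$-set has an $(r-1)$-partite, hence $K_r$-free, link graph, and both arguments are valid.
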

\begin{proof}
    Let $G$ be a $k$-graph on $n$ vertices with a balanced partition $V(G)=V_1\cup V_2\cup\cdots\cup V_{k+r-3}$, where
    $|V_i|\in\left\{\left\lfloor\frac{n}{k+r-3}\right\rfloor,\left\lceil\frac{n}{k+r-3}\right\rceil\right\}$ for each $i\in[k+r-3]$, and let the edge set consist of all $k$-sets that intersect each part in at most one vertex. Clearly, $\delta_{k-1}^+(G) \ge \left\lfloor\frac{n}{k+r-3}\right\rfloor \cdot (r-2)$.
    Consider the $2$-shadow of $G$. It is easy to see that $\partial_2(G)$ is $K_{k+r-2}$-free. Since $\partial_2(D_r^k)=K_{k+r-2}$, it follows that $G$ is $D_r^k$-free, which implies that $\gamma^+(D_r^k)\ge\frac{r-2}{k+r-3}$.
\end{proof}

By Proposition~\ref{pro:F graph}, we have $\gamma^+(Q_2^k)\ge \frac{1}{2}$.
Moreover, by Proposition~\ref{pro:D graph}, we have $\gamma^+(D_{k+1}^k)\ge \frac{1}{2}$.
Thus, in order to prove Theorem~\ref{thm:non-principal}, it suffices to show the following lemma.
\begin{lemma}\label{lem:main}
    For any integer $k\ge 3$, we have $\frac{1}{k}\le\gamma^+(Q_2^k,D_{k+1}^k) \le \frac{1}{2}-\alpha$, where $\alpha := \frac{1}{10(k-1)}$. 
\end{lemma}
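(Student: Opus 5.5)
The lower bound comes from an explicit construction. For the upper bound I would assume a large $\{Q_2^k,D_{k+1}^k\}$-free $G$ with $\delta^+_{k-1}(G)\ge(\frac12-\alpha)n$ and derive a contradiction.

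\emph{Lower bound.} Take a balanced partition $V=V_1\cup\dots\cup V_k$ and let the edges be all transversal $k$-sets. Every $(k-1)$-set in the shadow meets $k-1$ distinct parts, so its neighbourhood is exactly the remaining part. Hence $\delta^+_{k-1}(G)\ge\lfloor n/k\rfloor$.
\begin{itemize}
\item The $2$-shadow is complete $k$-partite, so it is $K_{k+1}$-free. Since $\partial_2(D_{k+1}^k)=K_{2k-1}$ and $2k-1\ge k+1$, $G$ is $D_{k+1}^k$-free.
\item Suppose $G$ contains $Q_2^k$. Take any core edge $S_i$. Its two special neighbours $x_1^i,x_2^i$ both lie in the unique part missed by $S_i$. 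But they lie in the common edge $\{x_1^i,\dots,x_k^i\}$, which must be transversal, a contradiction.
\end{itemize}
This gives $\gamma^+(Q_2^k,D_{k+1}^k)\ge\frac1k$.

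\emph{Upper bound, classification of shadow sets.} Call a $(k-1)$-set $S\in\partial_{k-1}(G)$ \emph{good} if $N(S)$ robustly contains a pair from $\partial_2(G)$. Concretely, for every $U$ with $|U|\le\e n$ there are $x_1,x_2\in N(S)\setminus U$ lying in a common edge that avoids $U\cup S$. Otherwise call $S$ \emph{bad}.

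Good sets are useful because $Q_2^k$ is essentially $F(H_2^{k-1},\cdot)$ with a two-neighbour gadget. So if the good sets contain a copy of $H_2^{k-1}$, the greedy disjoint embedding at the end of the proof of Theorem~\ref{thm: F(R,H)} yields $Q_2^k$. Hence the family of good sets, viewed as a $(k-1)$-graph, must be $H_2^{k-1}$-free.

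\emph{Upper bound, bad sets and $D_{k+1}^k$.} For a bad $S$, after deleting $o(n)$ vertices, $N(S)$ (of size about $n/2$) is almost independent in $\partial_2(G)$. So for every $v\in N(S)$ the link of $v$ lives essentially in $V\setminus N(S)$, a set of size at most $(\frac12+\alpha)n$.

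Now fix a $(k-2)$-set $Z$ and look at its link graph $L_Z=\{W:Z\cup W\in E(G)\}$. This graph is $K_{k+1}$-free, since $G$ is $D_{k+1}^k$-free. Every non-isolated vertex $v$ of $L_Z$ has degree $|N(Z\cup\{v\})|\ge(\frac12-\alpha)n$.

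The plan is to show that the bad sets force a near-bipartite structure. Pairs $Z\cup\{v\}$ with $v$ on one side have neighbourhoods of size close to $n/2$ concentrated on the other side. The codegree slack, together with the one-vertex-deficit rigidity, should then force either a $K_{k+1}$ in some $L_Z$ or many good sets.

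\emph{Upper bound, combining the two.} If the good sets are dense in the sense of positive codegree in $\partial_{k-1}(G)$, say $\delta^+_{k-2}$ above $(\frac12+\e)n$, then Corollary~\ref{thm:H graph} ($\gamma^+(H_2^{k-1})=\frac12$) gives a copy of $H_2^{k-1}$ and hence $Q_2^k$. Otherwise there is a $(k-2)$-set $T$ many of whose extensions are bad, and the previous paragraph applies. Running this inside one link graph $L_Z$ should produce $k+1$ vertices pairwise adjacent: alternately choose vertices from the two near-halves and use the $\alpha$ slack to keep their common neighbourhoods nonempty.

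\emph{Main obstacle.} The hard step is this final dichotomy. Positive codegree of the good family must be forced strictly above $\frac12$, or else a $K_{k+1}$ must be located in a link graph. The constant $\alpha=\frac1{10(k-1)}$ suggests a counting argument losing a factor of about $k-1$ as one extends through the $k-2$ vertices of $Z$. I would first try averaging over the $k-1$ subsets $Z\subset S$ of a bad $S$, accumulating $O(\alpha)$ losses each time.
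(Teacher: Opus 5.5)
Your lower-bound construction is correct and coincides with the paper's. The upper bound, however, is not established. The ``final dichotomy'' you flag as the main obstacle is exactly where the proof is missing, and your sketch for it would not work as stated.

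The split ``either the good sets have $\delta^+_{k-2}$ above $(\frac12+\varepsilon)n$, or some $(k-2)$-set has many bad extensions'' is not exhaustive. A $(k-2)$-set can have few good extensions simply by having few extensions in $\partial_{k-1}(G)$ at all. Nothing in your proposal lifts $\delta^+_{k-2}(\partial_{k-1}(G))$ above the trivial $(\frac12-\alpha)n$.

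Building a $K_{k+1}$ in $L_Z$ by alternating between the two near-halves also cannot succeed. For a bad $S$, the set $N(S)\setminus U$ contains no two vertices of a common edge, by the usual pushing argument, since $|U\cup S|<(\frac12-\alpha)n$. So every clique of every $L_Z$ has at most one vertex there.

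The missing idea is to feed the two facts you already recorded about $L_Z$ into the minimum-degree form of Tur\'an's theorem. Those facts are that $L_Z$ is $K_{k+1}$-free and that every non-isolated vertex has degree at least $(\frac12-\alpha)n$. Tur\'an's theorem in that form says a $K_{k+1}$-free graph with minimum degree $d$ has at least $\frac{k}{k-1}d$ vertices. The non-isolated vertices of $L_Z$ are exactly the $\partial_{k-1}(G)$-neighbours of $Z$. This gives $\delta^+_{k-2}(\partial_{k-1}(G))\ge\frac{k}{k-1}(\frac12-\alpha)n-1\ge(\frac12+3\alpha)n$, which is what $\alpha=\frac1{10(k-1)}$ is tuned for.

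With this boost your own framework closes in a few lines:
\begin{itemize}
\item If every set in $\partial_{k-1}(G)$ is good, then $\gamma^+(H_2^{k-1})=\frac12$ gives a copy of $H_2^{k-1}$ in $\partial_{k-1}(G)$. Your greedy embedding then yields $Q_2^k$.
\item If some $S$ is bad, then $Y:=N(S)\setminus U$ has at least $(\frac12-2\alpha)n$ vertices (taking $\varepsilon\le\alpha$) and meets every edge in at most one vertex. Take $y\in Y$ and a $(k-2)$-set $S'\subseteq S\cup\{y\}$ containing $y$. Every $\partial_{k-1}(G)$-neighbour of $S'$ lies outside $Y$, so there are at most $(\frac12+2\alpha)n$ of them, contradicting the boost.
\end{itemize}

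The paper runs the same two cases globally: either some set of size at least $(\frac12-2\alpha)n$ meets every edge in at most one vertex, or not. No clique needs to be built by hand and no averaging over subsets of bad sets is needed; the contradiction is purely numerical.
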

\begin{proof}
    By considering the balanced complete $k$-partite $k$-graph, it is easy to see that $ \gamma^+(Q_2^k,D_{k+1}^k) \ge \frac{1}{k}$.
    Next we will show $\gamma^+(Q_2^k,D_{k+1}^k) \le \frac{1}{2}-\alpha$.
    Let $G$ be a $k$-graph on $n$ vertices with $\delta_{k-1}^+(G)\ge\left(\frac{1}{2}-\alpha \right)n$, where $n$ is sufficiently large.
    Suppose that $G$ is $Q_2^k$-free and $D_{k+1}^k$-free.
    First, we show that $\partial_{k-1}(G)$ has large minimum positive codegree.

    \begin{cla}\label{cl:k-2degree}
    $\delta_{k-2}^+(\partial_{k-1}(G))\ge\left(\frac{1}{2}+3\alpha\right)n$.
    \end{cla}
    \begin{poc}
    For any $(k-2)$-set $S\subseteq V(G)$, let $R$ be the link graph of $S$ in $G$, i.e., 
    \[
    \mbox{$V(R)=\{v:\{v\}\cup S\in\partial_{k-1}(G)\}$ \ \ and \ \ $E(R)=\{uv:\{u,v\}\cup S\in G\}$.}
    \] Our aim is to show that $|V(R)|\ge\left(\frac{1}{2}+3\alpha\right)n$ whenever $E(R)$ is non-empty. Suppose that $E(R)$ is non-empty. Then the minimum degree of $R$ is at least $\left(\frac{1}{2}-\alpha\right)n$. Since $G$ is $D_{k+1}^k$-free, we have that $R$ is $K_{k+1}$-free. It follows from Tur\'an's theorem that $|V(R)|\ge \frac{k}{k-1}\delta_1(R)-1\ge \left(\frac{1}{2}+3\alpha\right)n$.
    \end{poc}
    
    \begin{cla}\label{cl:IndepY}
    There exists a vertex set $Y\subseteq V(G)$ of size at least $\left(\frac{1}{2}-2\alpha\right)n$ such that each edge of $G$ intersects $Y$ in at most one vertex.
    \end{cla}
    \begin{poc}
    Suppose, for a contradiction, that every set of size at least $\left(\frac{1}{2}-2\alpha\right)n$ contains at least two vertices of some edge.  By Claim~\ref{cl:k-2degree}, the $(k-1)$-graph $\partial_{k-1}(G)$ has positive codegree at least $\left(\frac{1}{2}+3\alpha\right)n$. By Corollary~\ref{thm:H graph}, it contains $H_2^{k-1}$ as a subgraph, which we denote by $X$. Let $S_1,S_2,\dots, S_m$ be all $(k-1)$-edges of $X$.
    
    We now prove that there exist disjoint edges $F_1,\dots,F_m\subseteq V(G)\setminus V(X)$ with $F_i=\{x_1^i,\dots,x_k^i\}$ such that $S_i\cup\{x_1^i\}$ and $S_i\cup\{x_2^i\}$ are edges of $G$ for every $1\le i\le m$.
    
    Suppose that we have found $F_1,\dots,F_{i-1}$ for some $i\in[m]$. Let $A:=X\cup\bigcup_{j=1}^{i-1}F_j$. Since $\delta_{k-1}^+(G)\ge\left(\frac{1}{2}-\alpha\right)n$ and $n$ is sufficiently large, we have
    \[
    \left|N(S_i)\setminus A\right|\ge\left(\frac{1}{2}-2\alpha\right)n.
    \]
    By our assumption for contradiction, there exists an edge $F_i$ intersecting $N(S_i)\setminus A$ in at least two vertices, call them $x_1^i$ and $x_2^i$. Among such edges pick one with $|F_i\setminus A|$ as large as possible. Since $\delta_{k-1}^+(G)\ge\left(\frac{1}{2}-\alpha\right)n>|A|$, a familiar argument shows that $F_i$ must be disjoint from $A$, as desired.
    Then $F_1,\dots,F_m$ together with $X$ form a copy of $Q_2^k$, a contradiction.
   \end{poc}
   Let $Y$ be a vertex set of size at least $\left(\frac{1}{2}-2\alpha\right)n$ such that each edge of $G$ intersects $Y$ in at most one vertex.
   If no edge intersects $Y$, then for any edge $E\in E(G)$ and any $(k-2)$-subset $S\subseteq E$, we have $N_{\partial_{k-1}(G)}(S)\subseteq V(G)\setminus Y$ and
   \[
   |N_{\partial_{k-1}(G)}(S)|\le n-|Y|\le\left(\frac{1}{2}+2\alpha\right)n,
   \]
   contradicting Claim~\ref{cl:k-2degree}.
   If some edge intersects $Y$ in exactly one vertex, take such an edge $E$ and choose a $(k-2)$-subset $S\subseteq E$ with $S\cap Y\neq\emptyset$. Again $N_{\partial_{k-1}(G)}(S)\subseteq V(G)\setminus Y$, so
   \[
   |N_{\partial_{k-1}(G)}(S)|\le n-|Y|\le\left(\frac{1}{2}+2\alpha\right)n,
   \]
   contradicting Claim~\ref{cl:k-2degree}. 
   Thus $G$ contains $Q_2^k$ or $D_{k+1}^k$ as a subgraph, which finishes the proof of Lemma~\ref{lem:main}.
\end{proof}

We can now formally deduce our main result. 

\begin{proof}[Proof of Theorem~\ref{thm:non-principal}.] Take $F_1:=Q_2^k$ and $F_2:=D_{k+1}^k$. By Proposition~\ref{pro:F graph} and Proposition~\ref{pro:D graph} (the latter applied with $r=k+1$), both $\gamma^+(F_1)$ and $\gamma^+(F_2)$ are at least $\frac{1}{2}$, while Lemma~\ref{lem:main} gives $0<\frac{1}{k}\le\gamma^+(F_1,F_2)\le\frac{1}{2}-\alpha<\frac{1}{2}$. Hence
\[
0<\gamma^+(F_1,F_2)<\min\{\gamma^+(F_1),\gamma^+(F_2)\},
\]
which proves Theorem~\ref{thm:non-principal}.\end{proof}

\section*{Acknowledgements}
AI (Claude Opus 4.8) was used to proofread the final draft of this paper.
\bibliography{bibexport}
\end{document}